\documentclass[final,5p,times,twocolumn]{elsarticle}

\usepackage{amsfonts,amssymb}
\usepackage{mathtools}
\usepackage{enumitem}
\usepackage{graphicx}
\usepackage[hidelinks]{hyperref}

\newcommand{\C}{\mathbb C}
\newcommand{\R}{\mathbb R}
\newcommand{\Q}{\mathbb Q}
\newcommand{\Rea}{\operatorname{Re}}
\newcommand{\abs}[1]{\left|#1\right|}
\newcommand{\norm}[1]{\left\|#1\right\|}

\newtheorem{theorem}{Theorem}[section]
\newtheorem{lemma}[theorem]{Lemma}
\newtheorem{proposition}[theorem]{Proposition}
\newtheorem{corollary}[theorem]{Corollary}
\newenvironment{proof}[1][Proof]
  {\par\smallskip\noindent\textit{#1.}\ }
  {\hfill$\square$\par\smallskip}

\begin{document}
\begin{frontmatter}
\title{Static output-feedback stabilization is NP-hard}
\author{Johan L\"ofberg}
\address{Division of Automatic Control\\Link\"oping University, Sweden
(e-mail: \texttt{johan.lofberg@liu.se})}
\date{August 17, 2026}

\begin{abstract}
We prove that unrestricted static output-feedback stabilization is NP-hard, already in the single-input multiple-output case. More precisely, we give a polynomial-time many-one reduction from the NP-complete Betweenness problem. Given an instance with $n$ elements and $m$ betweenness constraints, the reduction constructs integer matrices
$A\in\mathbb Z^{D\times D}$, $B\in\mathbb Z^{D\times1}$, and $C\in\mathbb Z^{n\times D}$ such that the instance is satisfiable if and only if there exists an unrestricted real row vector $K\in\R^{1\times n}$ for which $A+BKC$ is Hurwitz. The state dimension is $D=6(m+n)$, and every matrix entry has logarithmic bit length in the input size. The reduction uses one universal cubic source polynomial whose Routh determinant enforces the sign condition encoding betweenness, while fixed anchor values provide a mechanism for excluding spurious stabilizing gains of arbitrarily large magnitude. Frequency separation then compresses the simultaneous source conditions into one common affine polynomial, which is realized directly as the characteristic polynomial of $A+BKC$. As a by-product, deciding whether an affinely parameterized rational polynomial family contains a Hurwitz member on a rational box is NP-hard.
\end{abstract}
\begin{keyword}
static output feedback \sep NP-hardness \sep computational complexity \sep
Hurwitz stability \sep Betweenness \sep affine polynomial families
\end{keyword}
\end{frontmatter}

\section{Problem statement and main result}\label{sec:problem}

A square matrix is \emph{Hurwitz} if all its eigenvalues have strictly negative real part, and a polynomial is Hurwitz if all its roots have strictly negative real part. For a plant with rational matrices $A\in\Q^{D\times D}$, $B\in\Q^{D\times1}$, and $C\in\Q^{r\times D}$, and with the output feedback gain ranging over $K\in\R^{1\times r}$, we study the single-input, multiple-output (SIMO) static output-feedback decision predicate
\[
\mathsf{SOF}_{1}(A,B,C):\qquad
\exists K\colon A+BKC\ \text{is Hurwitz}.
\]

Static output-feedback stabilization is a fundamental and longstanding problem in control \cite{SYRMOS1997125}. Blondel and Tsitsiklis proved NP-hardness when the gain is subject to prescribed interval constraints, but stressed that this says little about the unrestricted problem: interval-constrained state-feedback stabilization is also NP-hard, whereas unconstrained state-feedback stabilization is solvable in polynomial time. They explicitly left the complexity of unrestricted static output-feedback stabilization open, and their subsequent survey again highlighted its unresolved status \cite{BlondelTsitsiklis1997,BlondelTsitsiklis2000}.

Other hardness results concern closely related but different problems. Toker and Özbay proved NP-hardness of general bilinear matrix inequality
feasibility and, independently, of simultaneous stabilization of
multiple plants by a common static output-feedback gain
\cite{TokerOzbay1995}. They explicitly noted that the latter does not
settle the complexity of static output-feedback stabilization for a
single plant. Fu and Luo subsequently proved NP-hardness of a
nonconvex matrix-inequality feasibility problem arising in fixed-order
output-feedback design \cite{FuLuo1997}. Later work proved that static
output-feedback pole placement is NP-hard, while noting that this does
not imply NP-hardness of stabilization \cite{Fu2004}.

These distinctions are essential. NP-hardness of related problems, or nonconvexity and poor computational complexity of certain solution methods, does not by itself establish NP-hardness of unrestricted static output-feedback stabilization. These related results sometimes lead to a conflation of the different questions, and one often encounters statements that ``static output-feedback stabilization is NP-hard'' without specifying the precise problem formulation.

To the best of our knowledge, the theorem below resolves this unrestricted hardness question.

\begin{theorem}[Main theorem]\label{thm:main}
The decision problem $\mathsf{SOF}_{1}$ is NP-hard. More precisely, from a Betweenness instance with $n$ elements and $m$ constraints one can construct in polynomial time integer matrices
\[
A\in\mathbb Z^{D\times D},\qquad
B\in\mathbb Z^{D\times1},\qquad
C\in\mathbb Z^{n\times D},
\]
with
\[
D=6(m+n),
\]
such that
\[
\begin{aligned}
&\text{the Betweenness instance is satisfiable}\\
&\qquad\Longleftrightarrow\quad
\exists K\in\R^{1\times n}:\ A+BKC\text{ is Hurwitz}.
\end{aligned}
\]
Every matrix entry has $O(\log L)$ bits, where $L$ is the encoding length of the Betweenness instance, and the complete dense matrix encoding has $O(L^2\log L)$ bits.
\end{theorem}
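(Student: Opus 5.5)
The plan follows the route sketched in the abstract: encode Betweenness as sign conditions on a universal cubic, combine all the resulting stability conditions by frequency separation into a single affine polynomial family in $K$, and realize that family as $\det(sI-A-BKC)$.

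\emph{Encoding betweenness.} Betweenness asks for a linear order of $n$ elements in which, for each triple $(a,b,c)$, element $b$ lies between $a$ and $c$. Equivalently, we need reals $x_1,\dots,x_n$ with $(x_a-x_b)(x_b-x_c)>0$ for every constraint; strictness can be assumed, since distinct positions can always be chosen. Take $K=(x_1,\dots,x_n)$. The key device is a fixed monic cubic $p(s;u,v,w)=s^3+\alpha_2 s^2+\alpha_1 s+\alpha_0$ whose coefficients are affine in three variables $u,v,w$. It is chosen so that the Routh--Hurwitz conditions, $\alpha_i>0$ and $\alpha_2\alpha_1-\alpha_0>0$, reduce on the relevant region to $(u-v)(v-w)>0$ together with inequalities that are linear in $u,v,w$. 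Concretely, I would pick $\alpha_2$ constant and let $\alpha_1,\alpha_0$ depend on the variables so that the Routh determinant $\alpha_2\alpha_1-\alpha_0$ becomes the required sign-indefinite product after an affine change of variables. This needs care, because the Routh determinant of an affine family is only bilinear, and producing $(u-v)(v-w)$ may require letting $\alpha_2$ depend affinely on some variables as well.

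\emph{Anchors.} The side inequalities must not block satisfying assignments. The scale invariance $x\mapsto \lambda x+\mu$ of Betweenness lets us push any solution into a bounded box; the ``anchor'' cubic blocks are then fixed-coefficient or single-variable factors that confine each $x_i$ to that box. These $n$ per-element blocks and $m$ per-constraint blocks account for the dimension $D=6(m+n)$, assuming each block contributes a degree-$6$ factor (the cubic and a companion).

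\emph{Frequency separation and realization.} We need a single polynomial $P(s;K)$, affine in $K$, that is Hurwitz iff all block polynomials are. My first attempt would be $P=\prod_j q_j(s/\omega_j)$ with widely separated integer frequencies $\omega_j$. This fails as stated, because a product of affine factors is not affine in $K$. Instead I would write $P(s;K)=P_0(s)+\sum_i x_i P_i(s)$, where $P_0=\prod_j q_j^{0}(s/\omega_j)$ and the $P_i$ are first-order perturbations. The frequencies must be separated enough that the roots near $\omega_j$ behave like those of the $j$th block. This requires perturbation bounds (Rouché-type, or a Hermite--Biehler interlacing argument) that are uniform over the box, and it requires $\log\omega_j=O(\log L)$ so that the bit-length claim holds. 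Finally, any target $P(s;K)=\chi_0(s)+\sum_i x_i P_i(s)$ with $\deg P_i<D$ is realized in controllable canonical form: $A$ is the companion matrix of $\chi_0$, $B=e_D$, and the $i$th row of $C$ is minus the coefficient vector of $P_i$. Then $\det(sI-A-BKC)=P(s;K)$, all entries are integers, and the dense encoding has $O(D^2\log L)=O(L^2\log L)$ bits.

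The main obstacle is the converse direction for \emph{unbounded} $K$: a stabilizing gain of huge magnitude must not exist unless the instance is satisfiable. The perturbation argument only controls a bounded box. For large $\|K\|$, I would show that the leading coefficients, or a Routh minor, of $P$ are forced to violate positivity because of the fixed anchor contributions. Scale and shift invariance is exactly what the anchors break, so for large $x$ some fixed-sign anchor term dominates. Making this quantitative while keeping the coefficients of logarithmic bit length is the step I expect to be hardest.
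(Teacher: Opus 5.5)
Your overall architecture matches the paper's, but the step you flag as the crux has no workable mechanism. Coefficient signs or Routh minors of a degree-$6J$ affine family need not fail along every unbounded direction. In the paper's aggregate, for instance, the $s^{D-1}$ coefficient at $x=\lambda\mathbf 1$ is $2NJ+2n\lambda>0$ for $\lambda>0$. The paper's argument is local instead. The anchor perturbation $h(w)=w(w-N)$ has a fixed zero $w=N$ in the open right half-plane. So if $\abs{x_i}=M\ge M_0$, Rouch\'e on $\abs{w-N}=N/4$ gives the shifted anchor factor $\bar q_A+x_ih$ exactly one root inside, for either sign of $x_i$, and $\abs{Q_a^0/Q_a}\le96N/M$ on that circle. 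In $(P-F_a)/F_a=(Q_a^0/Q_a)\sum_{b\ne a}\Delta_b/Q_b^0$ the distant sum is at most $128JM/(H-R_0)$. Hence $M$ cancels, and one polynomially bounded $H$ handles every $M\ge M_0$.

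On the bounded box you also need a uniform defect, and a Routh determinant equal to $(x_a-x_b)(x_b-x_c)$ does not provide one. At $x=\lambda\mathbf 1$ every triple cubic has $\Delta=0$, i.e.\ a root pair on the imaginary axis that the truncation may push to the left. The paper's gadget $q(s;u,v)=s^3+(N+u)s^2+N(N+v)s+N^2(N+u+v)+1$ has $\Delta=Nuv-1\le-1$ on no-instances. That defect, together with the gap $\abs{Q_a(i\omega)}\ge\mu$, is what makes the half-disk Rouch\'e comparison go through.

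Two of your concrete choices also fail.

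\emph{Multiplicative separation.} Separating clusters by $s\mapsto s/\omega_j$ forces $\omega_{j+1}/\omega_j\ge\gamma$ for some fixed $\gamma>1$, to keep the root annuli disjoint. So $\log\omega_J=\Omega(J)$, contradicting your own requirement $\log\omega_j=O(\log L)$. Worse, near a lower cluster the relative perturbation of a higher-scale factor tends to the ratio of constant coefficients $(q_k-q_k^0)(0,x)/q_k^0(0)$, which does not decay with the separation. For the paper's triple gadget it is $N^2(x_c-x_a)/(N^3+1)$, of size up to about $2M/N$, and this destroys the $M$-cancellation above. This term is not identically zero for any cubic gadget. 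With an $x$-independent constant coefficient the Hurwitz region $\{\alpha_2>0,\ \alpha_1>0,\ \alpha_2\alpha_1>\alpha_0\}$ is convex in $x$, whereas the betweenness set is a union of two disjoint cones. The paper instead shifts additively to $\pm ijH$ with polynomial $H$, in conjugate pairs to keep the coefficients real; this is where the $6$ in $D=6(m+n)$ comes from. Since every perturbation has degree at most two against a monic cubic, each distant ratio is at most $64M/\abs{s-c_b}$.

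\emph{Companion realization.} Your controllable canonical form puts the coefficients of $P_0$ and of the $P_i$ into $A$ and $C$. The low-order ones have $\Omega(J)$ bits; already $P_0(0)=\prod_j\abs{\bar q_j(ijH)}^2$. So your $O(\log L)$ per-entry bound, and the total you derive from it, fail for that realization. The paper avoids this with $A=\operatorname{diag}(\mathcal A_1,\ldots,\mathcal A_J)$, built from realified frequency-shifted blocks of $3\times3$ baseline companions. Their entries are only baseline coefficients, $\pm\Omega_j$, zeros and ones. It then obtains $\det(sI-A-BKC)=P$ from the matrix determinant lemma.
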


Hurwitz stability is invariant under transposition, and
\[
(A+BKC)^\top=A^\top+C^\top K^\top B^\top.
\]
Thus the theorem also gives NP-hardness for multiple-input, single-output static output feedback. Padding $B$ with zero columns and $C$ with zero rows embeds the SIMO instance into a general MIMO instance without changing feasibility, so unrestricted MIMO stabilization is NP-hard as well.

\subsection{The Betweenness problem}

A \emph{Betweenness} instance consists of a finite set $V$ and a collection $T$ of ordered triples $(a,b,c)$ of distinct elements of $V$. A total order satisfies $(a,b,c)$ if $b$ lies strictly between $a$ and $c$, i.e., either
\[
a<b<c
\qquad\text{or}\qquad
c<b<a.
\]
The decision problem asks whether there exists a total order of $V$ that satisfies every triple in $T$. Opatrny proved this problem NP-complete \cite{Opatrny1979}. We write
\[
n=|V|,\qquad m=|T|,
\]
and let $L$ be the binary encoding length of the instance. Then $n,m=O(L)$. Trivial empty instances can be handled separately, so we assume $n,m\ge1$.

We use a convenient anchored form. Add two additional elements called anchor values $\mathsf L,\mathsf R$ and, for every $i\in V$, append the ordinary Betweenness constraint
\[
(\mathsf L,i,\mathsf R).
\]
This restriction remains NP-complete: any satisfying order of the original instance extends by placing $\mathsf L$ first and $\mathsf R$ last, while deleting the anchors from a satisfying augmented order leaves every original constraint satisfied. Thus the transformation is a polynomial-time equivalence.

Throughout, $n=|V|$ and $m=|T|$ continue to denote the counts in the original instance. The anchored instance has $n+2$ elements and $m+n$ constraints, but the anchors have fixed numerical values and introduce neither gain variables nor additional free parameters.

\subsection{Proof blueprint}

The reduction assigns one unrestricted real parameter $x_i$ to each element $i\in V$. Put
\[
N:=2(n+1).
\]
The added anchor constraints are represented by fixing the numerical values of $\mathsf L$ and $\mathsf R$ at $0$ and $N$, respectively. These fixed anchors do not restrict $x$ (the future feedback gain); rather, their source-cubic stability conditions will force every stabilizing realization into a bounded box. Section~\ref{sec:source} uses one universal cubic
\[
q(s;u,v)=s^3+(N+u)s^2+N(N+v)s+N^2(N+u+v)+1.
\]
Its only nontrivial cubic Routh determinant is
\[
N(N+u)(N+v)-\bigl(N^2(N+u+v)+1\bigr)=Nuv-1.
\]
For an original triple $(a,b,c)$ we substitute
\[
u=x_b-x_a,\qquad v=x_c-x_b,
\]
so stability forces $x_b$ to lie between $x_a$ and $x_c$. For the anchor $(\mathsf L,i,\mathsf R)$ we obtain
\[
u=x_i,\qquad v=N-x_i,
\]
so the same Routh determinant becomes $Nx_i(N-x_i)-1$; Hurwitz stability of this anchor cubic therefore forces $0<x_i<N$. Equivalence between the original Betweenness instance and the simultaneous Hurwitz stability of all $m+n$ source cubics is exact, and Lemma~\ref{lem:global-defect} gives a uniform defect for no-instances, exploiting the compactness of the closed interval $[0,N]$.

The main step in Section~\ref{sec:aggregation} compresses the $m+n$ simultaneous cubic stability conditions into a single real monic polynomial $P(s,x)$ whose coefficients are integer and affine in $x$. Conjugate copies of the source cubics are shifted to widely separated imaginary frequencies and the exact product is truncated to the terms that are constant or linear in $x$. While doing this, the initially exploited compact region for $x$ is lost, but a temporary  bounded region can be chosen large enough to accommodate the necessary analysis. On a sufficiently large auxiliary bounded region, Rouch\'e's theorem transfers the relevant local root counts from each source cubic to $P$.

Section~\ref{sec:escape} then crucially removes the auxiliary-box restriction.
The anchor perturbation $h(s)=s(s-N)$ has a fixed open-right-half-plane
zero at $s=N$, and a local Rouch\'e argument shows that every sufficiently
large gain gives the aggregate an open-right-half-plane root as needed.

Finally, Section~\ref{sec:realization} realizes the affine aggregate directly as
\[
\det\bigl(sI-(A+BKC)\bigr),
\qquad K=[x_1\ \cdots\ x_n].
\]
The resulting realization is sparse and is constructed directly from the Betweenness incidence data and a few polynomially bounded constants.

The reduction is largely self-contained and uses only the cubic Routh--Hurwitz criterion \cite{Gantmacher1959}, Rouch\'e's theorem \cite{Conway1978}, and elementary inequalities and algebra. The only non-standard external result is the NP-completeness of Betweenness \cite{Opatrny1979}.

\subsection{Root analysis tools}

\begin{lemma}[Scaled cubic root bound]\label{lem:scaled-cubic-bound}
Let $p(s)=s^3+as^2+bs+c$. Every root $\zeta$ of $p$ satisfies
\[
\abs\zeta\le2\max\{1,\abs a,\abs b^{1/2},\abs c^{1/3}\}.
\]
\end{lemma}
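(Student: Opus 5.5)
We argue by contradiction with a direct size comparison, as in the classical Cauchy/Fujiwara bound. Put
\[
M:=\max\{1,\abs a,\abs b^{1/2},\abs c^{1/3}\},
\]
so that $\abs a\le M$, $\abs b\le M^2$ and $\abs c\le M^3$. Let $\zeta$ be a root of $p$ and suppose $\abs\zeta>2M$. The plan is to show that the leading term $\zeta^3$ then strictly dominates the remaining three terms, so $p(\zeta)\ne0$.

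From $p(\zeta)=0$ we get $\zeta^3=-(a\zeta^2+b\zeta+c)$. Taking absolute values and dividing by $\abs\zeta^3>0$ gives
\[
1\le \frac{\abs a}{\abs\zeta}+\frac{\abs b}{\abs\zeta^2}+\frac{\abs c}{\abs\zeta^3}.
\]
Write $t:=M/\abs\zeta$, so that $t<1/2$ under our assumption. Using the bounds on $\abs a,\abs b,\abs c$, the right-hand side is at most
\[
t+t^2+t^3<\tfrac12+\tfrac14+\tfrac18=\tfrac78<1,
\]
which is a contradiction. Hence $\abs\zeta\le 2M$, which is the claimed bound.

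There is no real obstacle here; the only point requiring care is normalizing each coefficient by the matching power of $M$, namely $M$, $M^2$ and $M^3$. This normalization is why the square root and cube root appear in the statement, and it is what makes the geometric-series estimate uniform. The $1$ inside the maximum is not needed for the argument. It only guarantees $M>0$, so the case $a=b=c=0$, where all roots are $0$, is covered trivially. It also makes the bound convenient for the later Rouch\'e arguments.
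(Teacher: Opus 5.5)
Your proposal is correct and follows essentially the same route as the paper's proof. Both argue by contradiction: apply the triangle inequality to $p(\zeta)=0$, divide by $\abs\zeta^3$, and bound the normalized coefficient terms by $\tfrac12+\tfrac14+\tfrac18<1$.
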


\begin{proof}
Since $p(\zeta)=0$, the triangle inequality gives
\[
\abs\zeta^3\le\abs a\abs\zeta^2+\abs b\abs\zeta+\abs c.
\]
Set $r:=\abs\zeta$. If
\[
r>2\max\{1,\abs a,\abs b^{1/2},\abs c^{1/3}\},
\]
then
\[
1\le\frac{\abs a}{r}+\frac{\abs b}{r^2}+\frac{\abs c}{r^3}
<\frac12+\frac14+\frac18<1,
\]
which is a contradiction.
\end{proof}

\begin{lemma}[Local root-count transfer]\label{lem:root-transfer}
Let $\mathcal{S}\subset\C$ be a bounded domain with piecewise smooth boundary, and let $P,Q,G$ be polynomials. Suppose that
\[
G(s)\ne0\qquad(s\in\mathcal{S})
\]
and
\[
\abs{P(s)-Q(s)G(s)}<\abs{Q(s)G(s)}
\qquad(s\in\partial\mathcal{S}).
\]
Then $P$ and $Q$ have the same number of roots in $\mathcal{S}$, counted with multiplicity, and neither has a root on $\partial\mathcal{S}$.
\end{lemma}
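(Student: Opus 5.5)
This is Rouché's theorem in the symmetric form, applied to the pair $P$ and $QG$, followed by the observation that multiplying by a zero-free factor does not change the root count inside $\mathcal S$.

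\emph{No roots on the boundary.} Fix $s\in\partial\mathcal S$. The strict inequality
\[
\abs{P(s)-Q(s)G(s)}<\abs{Q(s)G(s)}
\]
forces $Q(s)G(s)\neq0$, since otherwise the right side would be $0$. In particular $Q(s)\ne0$. If $P(s)=0$, the inequality would read $\abs{Q(s)G(s)}<\abs{Q(s)G(s)}$, which is impossible, so $P(s)\ne0$. Hence neither $P$ nor $Q$ vanishes on $\partial\mathcal S$.

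\emph{Equal root counts for $P$ and $QG$.} Consider the homotopy
\[
H_t=QG+t(P-QG),\qquad t\in[0,1].
\]
On $\partial\mathcal S$ we have
\[
\abs{H_t}\ge\abs{QG}-t\abs{P-QG}>0 .
\]
So no $H_t$ has a zero on the boundary. The number of zeros of $H_t$ in $\mathcal S$ is given by the argument-principle integral
\[
\frac{1}{2\pi i}\oint_{\partial\mathcal S}\frac{H_t'}{H_t}\,ds .
\]
This integral depends continuously on $t$ and is integer-valued, so it is constant in $t$. Comparing $t=0$ and $t=1$, $P$ and $QG$ have the same number of zeros in $\mathcal S$, counted with multiplicity.

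Two technical points need care here. Boundedness of $\mathcal S$ and piecewise smoothness of $\partial\mathcal S$ are what make the argument principle applicable. Also, if $P\equiv0$ the hypothesis fails immediately, so $P$ is not the zero polynomial and its zeros are isolated.

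\emph{Removing the factor $G$.} Because $G$ has no zeros in $\mathcal S$, every zero of $QG$ in $\mathcal S$ is a zero of $Q$, with the same multiplicity. The order of vanishing of a product is the sum of the orders, and $G$ contributes order $0$ at each point of $\mathcal S$. So $QG$ and $Q$ have the same number of roots in $\mathcal S$, and combining with the previous step gives the claim for $P$ and $Q$.

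The argument is routine; the closest thing to an obstacle is being careful that the hypotheses suffice for the argument principle. If $\mathcal S$ is open, $G$ may vanish on $\partial\mathcal S$ without affecting anything, since the strict inequality already forces $G\ne0$ on the boundary. If $\mathcal S$ is read as a closed domain, $G\ne0$ on all of it directly. In either reading the boundary inequality settles the matter.
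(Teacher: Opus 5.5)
Your proposal is correct and follows the same route as the paper. Both apply Rouch\'e to $P$ versus $QG$, discard the zero-free factor $G$ inside $\mathcal S$, and get boundary nonvanishing from the strict inequality; the only difference is that you re-derive the symmetric Rouch\'e theorem via the homotopy $H_t$ and the argument principle, whereas the paper simply cites it.
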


\begin{proof}
Rouch\'e's theorem gives the same root count for $P$ and $QG$. Since $G$ is nonvanishing on $\mathcal{S}$, the roots of $QG$ in $\mathcal{S}$ are exactly those of $Q$, with the same multiplicities. The strict boundary inequality forces $QG$ to be nonzero there, and the reverse triangle inequality then gives $\abs P>0$ on $\partial\mathcal{S}$.
\end{proof}

\section{Anchored Betweenness source cubics}\label{sec:source}

Let $V=\{1,\ldots,n\}$ and let $T$ contain the $m$ original Betweenness triples. Set
\begin{equation}\label{eq:N}
N=2(n+1)\qquad(N\ge4).
\end{equation}
The reduction assigns one real coordinate $x_i$ to each element $i\in V$. Write
\[
x=(x_1,\ldots,x_n)\in\R^n.
\]
The anchor values are fixed at $0$ and $N$ and introduce no gain variables.

Define the universal source cubic which will do all the heavy lifting in representing Betweenness constraints. It is a monic cubic in $s$ with two real parameters $u,v$:
\begin{equation}\label{eq:universal-cubic}
q(s;u,v)
=s^3+(N+u)s^2+N(N+v)s+\bigl(N^2(N+u+v)+1\bigr).
\end{equation}
For a monic cubic $s^3+as^2+bs+c$, the Routh--Hurwitz criterion is
\[
a>0,\qquad b>0,\qquad c>0,\qquad \Delta = ab-c>0.
\]
For \eqref{eq:universal-cubic}, the nontrivial determinant $\Delta$ cancels conveniently:
\begin{equation}\label{eq:delta-uv}
\Delta=N(N+u)(N+v)-\bigl(N^2(N+u+v)+1\bigr)=Nuv-1.
\end{equation}
For a triple $t=(a,b,c)\in T$, put
\begin{equation}\label{eq:uv-triple}
u_t=x_b-x_a,\qquad v_t=x_c-x_b
\end{equation}
and define $q_t^B(s,x)=q(s;u_t,v_t)$. By \eqref{eq:delta-uv}, Hurwitz stability of $q_t^B$ requires
\[
N(x_c-x_b)(x_b-x_a)>1,
\]
which encodes the Betweenness constraint that $x_b$ lies strictly between $x_a$ and $x_c$. Its \emph{baseline} and \emph{affine perturbation} are
\begin{align}
\bar q_B(s)&:=q_t^B(s,0)=s^3+Ns^2+N^2s+N^3+1,\label{eq:qB0}\\
q_t^B(s,x)-\bar q_B(s)
&=-x_a(s^2+N^2)+x_b(s^2-Ns)+x_c(Ns+N^2).\label{eq:triple-pert}
\end{align}

For each $i\in V$, the appended anchor $(\mathsf L,i,\mathsf R)$ is represented by $u=x_i$ and $v=N-x_i$. Its cubic is
\begin{align}
q_i^A(s,x_i)
&=s^3+(N+x_i)s^2+N(2N-x_i)s+(2N^3+1)\notag\\
&=\bar q_A(s)+x_i h(s),\label{eq:qA}
\end{align}
where
\begin{equation}\label{eq:qA0-h}
\bar q_A(s)=s^3+Ns^2+2N^2s+2N^3+1,~ h(s)=s^2-Ns=s(s-N).
\end{equation}
Its Routh determinant is
\begin{equation}\label{eq:anchor-delta}
\Delta_i^A=Nx_i(N-x_i)-1.
\end{equation}
thus Hurwitz stability of the anchor cubic requires $0<x_i<N$. 

We can now prove our first step in our chain of equivalences by showing that our polynomials indeed encode Betweenness problems via stability properties.

\begin{proposition}[Exact source equivalence]\label{prop:source}
The supplied Betweenness instance is satisfiable if and only if there exists $x\in\R^n$ for which all $m+n$ source cubics $q_t^B$ and $q_i^A$ are Hurwitz.
\end{proposition}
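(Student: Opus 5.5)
We prove the two directions separately, working throughout with the cubic Routh--Hurwitz criterion $a>0$, $b>0$, $c>0$, $ab-c>0$ and the determinant identities \eqref{eq:delta-uv} and \eqref{eq:anchor-delta}.

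\emph{Necessity} (Hurwitz $\Rightarrow$ satisfiable). Suppose $x\in\R^n$ makes every source cubic Hurwitz.

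First, for each anchor cubic $q_i^A$, $\Delta_i^A>0$ gives $Nx_i(N-x_i)>1$. Hence $x_i(N-x_i)>0$, so $0<x_i<N$.

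Second, for each triple $t=(a,b,c)$, $\Delta=Nu_tv_t-1>0$ forces $u_tv_t>0$. So $x_b-x_a$ and $x_c-x_b$ are nonzero with the same sign, i.e.\ $x_a<x_b<x_c$ or $x_c<x_b<x_a$.

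Third, we need a strict total order. Ties among the $x_i$ may occur only between elements that never appear together in a triple, since within a triple all three values are distinct. Order $V$ by increasing $x_i$ and break ties arbitrarily, e.g.\ by index. Every triple then retains its strict betweenness, because the three values involved are distinct. The anchor triples hold automatically if $\mathsf L$ is placed first and $\mathsf R$ last, and indeed $0<x_i<N$ places each $x_i$ between the anchor values. This direction uses only $\Delta>0$; the conditions on $a$, $b$, $c$ are not needed.

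\emph{Sufficiency} (satisfiable $\Rightarrow$ Hurwitz). Given a satisfying order, let $\pi(i)\in\{1,\dots,n\}$ be the rank of $i$, and set $x_i:=2\pi(i)$. Then $x_i\in[2,2n]\subset(0,N)$, since $N=2(n+1)$, and distinct elements differ by at least $2$.

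We check all four Routh--Hurwitz conditions for each cubic.

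\emph{Triples.} For $t=(a,b,c)$, the differences $u_t$ and $v_t$ have the same sign and satisfy $|u_t|,|v_t|\ge2$. Hence $Nu_tv_t\ge4N>1$, so $\Delta>0$. The bounds $|u_t|,|v_t|\le 2n-2<N$ give $a=N+u_t>0$ and $b=N(N+v_t)>0$. For the constant term, $|u_t+v_t|=|x_c-x_a|<N$ gives $c=N^2(N+u_t+v_t)+1>0$.

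\emph{Anchors.} Here $x_i\ge2$ and $N-x_i\ge2$, so $\Delta_i^A\ge4N-1>0$. The coefficients are $N+x_i>0$, $N(2N-x_i)>0$ and $2N^3+1>0$.

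So all $m+n$ cubics are Hurwitz.

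The only mildly delicate point is the tie-breaking step in the necessity direction. It is resolved by observing that $\Delta>0$ already forces strict inequalities within each triple, so any ties lie outside every triple and can be broken arbitrarily.
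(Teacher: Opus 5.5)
Your proof is correct and follows the paper's argument essentially step by step. For sufficiency you use the same even-integer realization (your rank map $\pi$ is the inverse of the paper's enumeration) and check all four Routh--Hurwitz conditions; for necessity you use $\Delta>0$ on the anchor and triple cubics and then sort the values, breaking ties arbitrarily, which the paper handles the same way.
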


\begin{proof}
Suppose first that a satisfying total order is given. Let $\pi(k)$ denote the $k$th element in that order, and set
\[
x_{\pi(k)}=2k,\qquad k=1,\ldots,n.
\]
Then $2\le x_i\le2n<N$. For every anchor,
\[
Nx_i(N-x_i)\ge4Nn,
\]
so \eqref{eq:anchor-delta} is at least $15$, since $N\ge4$ and $n\ge1$, and all anchor coefficients are positive. For a satisfied original triple, $u_t,v_t$ are nonzero even integers with the same sign, hence $u_tv_t\ge4$ and \eqref{eq:delta-uv} is at least $15$, since $N\ge4$. Moreover
\[
N+u_t\ge4,\qquad N+v_t\ge4,
\]
and since $u_t+v_t=x_c-x_a\ge-2(n-1)$,
\[
N^2(N+u_t+v_t)+1\ge4N^2+1>0.
\]
Thus every source cubic is Hurwitz.

Conversely, if every anchor cubic is Hurwitz, \eqref{eq:anchor-delta} gives
\[
Nx_i(N-x_i)>1,
\]
hence $0<x_i<N$ for every $i$. For an original triple all three nonleading coefficients are then positive, and Hurwitz stability gives
\[
N(x_b-x_a)(x_c-x_b)>1>0.
\]
Thus $x_b$ lies strictly between $x_a$ and $x_c$. Sorting the real numbers $x_i$ yields a total order satisfying every original triple. If some unrelated elements have equal values, break those ties arbitrarily; no constrained triple contains a tie because its product is strictly positive.
\end{proof}

The same structure gives a uniform defect for no-instances, globally over all real parameter vectors.

\begin{lemma}[Global no-instance defect]\label{lem:global-defect}
If the Betweenness instance is unsatisfiable, then for every $x\in\R^n$ there is a source cubic whose Routh determinant satisfies
\[
\Delta\le-1.
\]
The selected cubic can always be chosen with constant coefficient $c\ge1$.
\end{lemma}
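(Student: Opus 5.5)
The plan is a two-case split according to whether $x$ lies in the open box $(0,N)^n$. In each case I will exhibit a source cubic whose Routh determinant is $\Delta=N\cdot(\text{product})-1$ with the product $\le 0$, which gives $\Delta\le-1$ immediately from \eqref{eq:delta-uv} or \eqref{eq:anchor-delta}. Separately I will check that the constant coefficient of the chosen cubic is at least $1$. The argument does not need compactness or any quantitative estimate. The uniformity of the bound $-1$ comes from the exact form $Nuv-1$ of the determinant.

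\emph{Case 1: some coordinate $x_i\notin(0,N)$.} Then $x_i\le0$ or $x_i\ge N$. In both subcases the factors $x_i$ and $N-x_i$ do not have the same strict sign, so $x_i(N-x_i)\le0$. By \eqref{eq:anchor-delta}, $\Delta_i^A=Nx_i(N-x_i)-1\le-1$. The constant coefficient of the anchor cubic $q_i^A$ in \eqref{eq:qA} is $2N^3+1\ge1$, independently of $x$. So the anchor cubic $q_i^A$ is the selected cubic.

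\emph{Case 2: $x\in(0,N)^n$.} I claim some original triple $t=(a,b,c)\in T$ has $u_tv_t=(x_b-x_a)(x_c-x_b)\le0$. If not, every triple has $u_tv_t>0$, so $x_b$ lies strictly between $x_a$ and $x_c$ for every triple. Exactly as in the converse half of Proposition~\ref{prop:source}, sorting the values $x_i$ and breaking ties arbitrarily then yields a total order satisfying all triples. No constrained triple can contain a tie, since its product is strictly positive. This contradicts unsatisfiability. For the triple $t$ found, \eqref{eq:delta-uv} gives $\Delta=Nu_tv_t-1\le-1$. Its constant coefficient is $N^2(N+x_c-x_a)+1$. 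Because $x_a,x_c\in(0,N)$, we have $x_c-x_a>-N$, so $N+x_c-x_a>0$ and the constant coefficient is strictly greater than $1$.

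There is no substantial obstacle. The only points needing care are the boundary values $x_i\in\{0,N\}$, which are absorbed into Case~1 because the product $x_i(N-x_i)$ then vanishes, and the tie-breaking in Case~2, which is handled verbatim as in Proposition~\ref{prop:source}.
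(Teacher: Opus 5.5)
Your proof is correct and follows the paper's argument: an anchor cubic handles an out-of-range coordinate, and otherwise the sorting contradiction produces an original triple with $u_tv_t\le0$. The only difference is cosmetic. You place the boundary values $x_i\in\{0,N\}$ in the anchor case, where the product vanishes, and so obtain the strict bound $c>1$ for the triple. The paper instead works on the closed box $[0,N]^n$ and uses $x_c-x_a\ge-N$ to get $c\ge1$.
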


\begin{proof}
If some $x_i\notin[0,N]$, select its anchor. Then $x_i(N-x_i)\le0$, so \eqref{eq:anchor-delta} gives $\Delta_i^A\le-1$, while its constant coefficient is $2N^3+1$.

Otherwise all $x_i\in[0,N]$. If every original triple had
\[
(x_b-x_a)(x_c-x_b)>0,
\]
sorting the $x_i$ would satisfy every Betweenness constraint, contradicting unsatisfiability. Hence some original triple has product at most zero and therefore $\Delta\le-1$. For that triple
\[
c=N^2(N+x_c-x_a)+1\ge1.
\]
\end{proof}

\subsection{Uniform source estimates on an auxiliary bounded region}

The forthcoming aggregation needs uniform root and conditioning bounds on a bounded set. Define
\begin{equation}\label{eq:M0}
M_0:=100N,
\qquad
\mathcal B:=\{x\in\R^n:\norm{x}_\infty\le M_0\}.
\end{equation}
Note that this box is only an analytical device to enable us to analyze the problem in $\mathcal B$ and its complement in two stages; the final gain remains unrestricted. Introduce the explicit constants
\begin{align}
A_0&:=N+2M_0=201N,\label{eq:A0}\\
R&:=2A_0=402N,\qquad R_0:=R+1,\label{eq:R-R0}\\
\mu&:=\frac{1}{2A_0^2+1}.\label{eq:mu}
\end{align}

\begin{lemma}[Root disks]\label{lem:root-disks}
For $x\in\mathcal B$, every root of every source cubic and of either baseline cubic $\bar q_B,\bar q_A$ lies in $\abs s\le R$.
\end{lemma}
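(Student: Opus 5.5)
The plan is to apply Lemma~\ref{lem:scaled-cubic-bound} to each cubic in turn, bounding its coefficients uniformly for $x\in\mathcal B$ and checking that the resulting radius is at most $R=2A_0$. Since $\norm{x}_\infty\le M_0$, every difference $x_j-x_k$ satisfies $\abs{x_j-x_k}\le 2M_0$. Hence for a triple $t$ we have $\abs{u_t},\abs{v_t}\le 2M_0$ and $\abs{u_t+v_t}=\abs{x_c-x_a}\le 2M_0$. For an anchor, $\abs{u}=\abs{x_i}\le M_0$ and $\abs v=\abs{N-x_i}\le N+M_0$. The baselines correspond to $x=0$, which lies in $\mathcal B$, or can be treated directly.

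Next I bound each coefficient of $q(s;u,v)$ by the appropriate power of $A_0=N+2M_0$, so that the maximum in Lemma~\ref{lem:scaled-cubic-bound} is at most $A_0$:
\begin{itemize}
\item the $s^2$ coefficient satisfies $\abs{N+u}\le N+2M_0=A_0$;
\item the $s$ coefficient satisfies $\abs{N(N+v)}\le N(N+2M_0)\le A_0^2$, so its square root is at most $A_0$;
\item the constant coefficient satisfies $\abs{N^2(N+u+v)+1}\le N^2A_0+1\le A_0^3$, since $N^2A_0+1\le A_0^2\cdot A_0$ because $A_0\ge N\ge4$ and $A_0^3-N^2A_0\ge A_0(A_0^2-N^2)\ge1$. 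Its cube root is therefore at most $A_0$.
\end{itemize}
The same bounds cover the anchor cubic $q_i^A$, since its parameters also satisfy $\abs u\le A_0-N$ and $\abs v\le N+M_0\le 2M_0$, the latter because $M_0=100N\ge N$. Also $\abs{u+v}=N\le 2M_0$. In every case $1\le A_0$, so Lemma~\ref{lem:scaled-cubic-bound} gives $\abs\zeta\le 2A_0=R$.

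For the baselines, $\bar q_B=q(\cdot;0,0)$ and $\bar q_A=q(\cdot;0,N)$ are special cases of the above with admissible parameters, so they need no separate treatment.

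I expect no step to be a real obstacle. The only point needing care is the constant-coefficient estimate, where the $+1$ and the factor $N^2$ must be absorbed into $A_0^3$. This works with ample room because $A_0=201N$.
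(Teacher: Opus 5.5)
Your proposal is correct and follows essentially the same route as the paper: you bound the quadratic, linear and constant coefficients by $A_0$, $A_0^2$ and $A_0^3$ using $\norm{x}_\infty\le M_0$, and then invoke Lemma~\ref{lem:scaled-cubic-bound} to get $|\zeta|\le 2A_0=R$. The differences are cosmetic: you treat every cubic uniformly as $q(\cdot;u,v)$ with $|u|,|v|,|u+v|\le 2M_0$, where the paper lists separate numerical bounds such as $401N^3+1$ and $102N^2$; and your step $A_0^2-N^2\ge1$ rests on $A_0=201N>N$, not merely on $A_0\ge N$.
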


\begin{proof}
For an original triple, $|u|,|v|\le2M_0$, hence the coefficients of $q_t^B$ satisfy
\[
\begin{aligned}
|N+u|&\le A_0,\qquad |N(N+v)|\le201N^2,\\
|N^2(N+u+v)+1|&\le401N^3+1.
\end{aligned}
\]
For an anchor the corresponding bounds are $101N$, $102N^2$, and $2N^3+1$. Thus, for every source cubic and baseline cubic, $A_0$ dominates $1$, the magnitude of the quadratic coefficient, the square root of the linear coefficient, and the cube root of the constant coefficient. Lemma~\ref{lem:scaled-cubic-bound} gives the common bound $R=2A_0$.
\end{proof}

\begin{lemma}[Uniform imaginary-axis separation]\label{lem:imag-gap}
Let $p(s)=s^3+as^2+bs+c$ be either a source cubic at the satisfying realization from Proposition~\ref{prop:source}, or the bad source cubic selected by Lemma~\ref{lem:global-defect} for some $x\in\mathcal B$. Then
\[
\abs{p(i\omega)}\ge\mu
\qquad(\omega\in\R).
\]
At the satisfying realization, $p$ is Hurwitz. In the selected no-instance case, $p$ has an open-right-half-plane root.
\end{lemma}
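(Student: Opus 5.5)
The plan is to bound $\abs{p(i\omega)}$ from below using only the two facts shared by both cases: $\abs\Delta\ge1$ and $c\ge1$. The supporting fact is that the coefficients are uniformly bounded, $\abs a\le A_0$. At the satisfying realization of Proposition~\ref{prop:source} we have $x_i\in[2,2n]\subset\mathcal B$, and there $\Delta\ge15$ and $c\ge1$. In the no-instance case Lemma~\ref{lem:global-defect} gives $\Delta\le-1$ and $c\ge1$, with $x\in\mathcal B$ by hypothesis. In both cases the coefficient bounds in the proof of Lemma~\ref{lem:root-disks} give $\abs a\le A_0$.

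Write $p(i\omega)=\mathrm{Re}+i\,\mathrm{Im}$ with $\mathrm{Re}=c-a\omega^2$ and $\mathrm{Im}=\omega(b-\omega^2)$. The key algebraic identity, valid for $\omega\ne0$, is
\[
\Delta=ab-c=a\,\frac{\mathrm{Im}}{\omega}-\mathrm{Re},
\]
since the $a\omega^2$ terms cancel. It gives $1\le\abs\Delta\le\bigl(\abs a/\abs\omega+1\bigr)\abs{p(i\omega)}$, so a lower bound on $\abs\omega$ yields a lower bound on $\abs{p(i\omega)}$. Small $\omega$ is instead handled through the real part, which is then close to $c\ge1$.

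Concretely, I split into two cases.

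(i) Suppose $\abs a\,\omega^2\le c/2$; this includes $\omega=0$ and $a=0$. Then $\abs{\mathrm{Re}}\ge c/2\ge1/2$, so $\abs{p(i\omega)}\ge1/2\ge\mu$.

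(ii) Otherwise $\omega^2>c/(2\abs a)\ge1/(2A_0)$, so $1/\abs\omega<\sqrt{2A_0}$. The identity then gives
\[
\abs{p(i\omega)}\ge\frac{1}{A_0\sqrt{2A_0}+1}.
\]
This is at least $\mu=1/(2A_0^2+1)$ because $\sqrt{2A_0}\le2A_0$ for $A_0=201N\ge1$.

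The only delicate point is this case split; once the identity above is found, everything else is elementary.

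For the final two assertions: at the satisfying realization, Hurwitz stability of $p$ was already shown in Proposition~\ref{prop:source}. In the selected no-instance case, $\Delta\le-1$ violates the Routh--Hurwitz criterion, so $p$ is not Hurwitz. The bound just proved shows that $p$ has no root on the imaginary axis. Hence a non-Hurwitz $p$ must have a root in the open right half-plane.
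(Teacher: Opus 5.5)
Your proposal is correct and follows essentially the same route as the paper: the same identity $\Delta=a\,\mathrm{Im}/\omega-\mathrm{Re}$ with a small/large-frequency split, and the same conclusion via Routh--Hurwitz plus the absence of imaginary-axis roots. The only differences are minor. You split at $\abs a\,\omega^2\le c/2$ rather than at the paper's $\abs\omega\le(2A_0)^{-1/2}$. You also check explicitly that the satisfying realization lies in $\mathcal B$, so that $\abs a\le A_0$ applies, which the paper leaves implicit.
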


\begin{proof}
In every relevant case $c\ge1$, $|a|\le A_0$, and
\[
|ab-c|\ge1.
\]
At a satisfying realization the latter quantity is at least $15$. Write
\[
p(i\omega)=R(\omega)+iI(\omega),\qquad
R=c-a\omega^2,\qquad I=\omega(b-\omega^2),
\]
and put $\delta=ab-c$. For $\omega\ne0$ the exact identity
\begin{equation}\label{eq:cubic-imag-identity}
\delta=a\frac{I}{\omega}-R
\end{equation}
holds. If $|\omega|\le(2A_0)^{-1/2}$, then
\[
R\ge c-|a|\omega^2\ge\frac12,
\]
so $|p(i\omega)|\ge1/2>\mu$. Otherwise,
\[
\begin{aligned}
1\le|\delta|
&\le\left(\frac{A_0}{|\omega|}+1\right)|p(i\omega)|\\
&<\bigl(\sqrt2 A_0^{3/2}+1\bigr)|p(i\omega)|\\
&\le(2A_0^2+1)|p(i\omega)|.
\end{aligned}
\]
This proves the stated gap. In the bad case $\delta\le-1$, so the cubic is not Hurwitz; the positive gap excludes imaginary-axis roots, and therefore at least one root lies in the open right half-plane.
\end{proof}

\section{Frequency-separated affine aggregation}\label{sec:aggregation}

Let $q_1,\ldots,q_J$, where $J=m+n$, denote the $m+n$ source cubics. Their total degree is $3J$. Without the requirement of affine dependence on $x$ necessary to be able to construct our static output feedback instance in the end, simultaneous Hurwitz stability could be encoded exactly by their product. The product, however, contains higher-order products of the parameters. Our approach is to first shift conjugate copies of the cubics to widely separated imaginary frequencies, form the product but only retain the terms containing at most one parameter-dependent perturbation, and show that root counts are preserved.

Index the cubics and the two signs of their imaginary shifts by
\[
\mathcal A=\{(j,\sigma):1\le j\le J,\ \sigma\in\{-1,+1\}\}.
\]
Define the local conditioning constant
\begin{equation}\label{eq:Cstar}
C_*:=(R_0+R)^3(2A_0^2+1)
\end{equation}
and choose the integer separation
\begin{equation}\label{eq:H}
\boxed{H:=2R_0+1024\,J\,C_*M_0.}
\end{equation}
Introduce the imaginary axis shifts
\[
\Omega_j=jH,\qquad c_{j,\sigma}=i\sigma\Omega_j.
\]
For $a=(j,\sigma)\in\mathcal A$, define the shifted source cubics
\begin{equation}\label{eq:shifted-factors}
Q_a(s,x)=q_j(s-c_a,x),\quad
Q_a^0(s)=\bar q_j(s-c_a),\quad
\Delta_a=Q_a-Q_a^0.
\end{equation}
Let
\[
P_0(s)=\prod_{a\in\mathcal A}Q_a^0(s).
\]
The aggregated affine polynomial is
\begin{equation}\label{eq:P-definition}
P(s,x)
=P_0(s)+\sum_{a\in\mathcal A}
\Delta_a(s,x)\frac{P_0(s)}{Q_a^0(s)}.
\end{equation}
Equivalently, $P$ is the first-order truncation, in the perturbations $\Delta_a$, of the exact product
\[
\prod_{a\in\mathcal A}Q_a(s,x)
=\prod_{a\in\mathcal A}\bigl(Q_a^0(s)+\Delta_a(s,x)\bigr):
\]
it retains the constant term and the terms containing exactly one $\Delta_a$, and discards all terms containing two or more perturbations. 
Because the two frequency copies are conjugate, $P$ has real integer coefficients. Every $\Delta_a$ is affine in $x$ and has degree at most two in $s$, so $P$ is monic and affine in $x$ with degree
\begin{equation}\label{eq:D}
D=6J=6(m+n).
\end{equation}

For a fixed cluster $a$, put
\begin{equation}\label{eq:cluster-disks}
D_a=\{s:\abs{s-c_a}<R_0\},
\qquad D_a^+=D_a\cap\{s:\Rea s>0\},
\end{equation}
and
\begin{equation}\label{eq:Fa}
G_a(s)=\prod_{b\ne a}Q_b^0(s),
\qquad F_a(s,x)=Q_a(s,x)G_a(s).
\end{equation}
Since $P_0=Q_a^0G_a$ and $Q_a=Q_a^0+\Delta_a$, direct cancellation gives the exact relative-error identity
\begin{equation}\label{eq:relative-error}
\frac{P-F_a}{F_a}
=\frac{Q_a^0}{Q_a}
\sum_{b\ne a}\frac{\Delta_b}{Q_b^0}.
\end{equation}
The path forward now is that we want to use Rouch\'e's theorem in Lemma \ref{lem:root-transfer} to compare $P$ and $F_a$ on the boundary of $D_a$ and infer roots of $P$ from $F_a$. For this we have to bound the right-hand side of \eqref{eq:relative-error}. 
The next lemma gives a uniform bound on the relative error of the perturbation $\Delta_b$ for $b\ne a$.

\begin{lemma}[Normalized distant estimate]\label{lem:distant-infty}
Let $M=\norm{x}_\infty$. For every source index $j$ and every local variable $z$ with $\abs z\ge2R$,
\[
\abs{\frac{\Delta_j(z,x)}{\bar q_j(z)}}
\le\frac{64M}{\abs z}.
\]
\end{lemma}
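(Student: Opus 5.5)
We bound the numerator and the denominator of $\Delta_j(z,x)/\bar q_j(z)$ separately, for $\abs z\ge 2R$. Here $\Delta_j(z,x)=q_j(z,x)-\bar q_j(z)$ is the affine perturbation, given by \eqref{eq:triple-pert} for an original triple and by $x_ih(z)$ for an anchor. In both cases it is a linear combination of the $x$-coordinates with polynomial coefficients of degree at most two in $z$. The aim is a numerator bound of order $M\abs z^2$ and a denominator bound of order $\abs z^3$.

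\emph{Numerator.} For an original triple, \eqref{eq:triple-pert} and the triangle inequality give
\[
\abs{\Delta_j}\le M\bigl(\abs{z^2+N^2}+\abs{z^2-Nz}+\abs{Nz+N^2}\bigr)\le M\bigl(2\abs z^2+3N\abs z+2N^2\bigr).
\]
For an anchor we have $\abs{\Delta_j}\le M(\abs z^2+N\abs z)$. Since $\abs z\ge 2R=804N$, both $N\abs z$ and $N^2$ are tiny multiples of $\abs z^2$. Hence in both cases $\abs{\Delta_j}\le 3M\abs z^2$ with room to spare.

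\emph{Denominator.} By Lemma~\ref{lem:root-disks}, every root $\zeta_k$ of the baseline cubic $\bar q_j$ (either $\bar q_B$ or $\bar q_A$) satisfies $\abs{\zeta_k}\le R$. Factor $\bar q_j(z)=\prod_{k=1}^3(z-\zeta_k)$. For $\abs z\ge 2R$ each factor satisfies $\abs{z-\zeta_k}\ge\abs z-R\ge\abs z/2$, so $\abs{\bar q_j(z)}\ge\abs z^3/8$. Alternatively, a direct triangle-inequality bound on the baseline coefficients would do the same job, since $N\ll\abs z$.

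Combining the two bounds gives $\abs{\Delta_j/\bar q_j}\le 3M\abs z^2\cdot 8/\abs z^3=24M/\abs z\le 64M/\abs z$. There is no real obstacle. The only points needing care are invoking the root-disk bound for the baseline cubics, which Lemma~\ref{lem:root-disks} covers explicitly independent of $x$, and making sure the lower-order terms $N\abs z$ and $N^2$ in the numerator are absorbed using $\abs z\ge 804N$. The generous constant $64$ leaves ample slack.
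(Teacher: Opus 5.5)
Your proposal is correct and follows the paper's proof. You bound the explicit perturbation by a multiple of $M\abs z^2$ via the triangle inequality, and bound the baseline from below by $(\abs z-R)^3\ge\abs z^3/8$ using the root disks of Lemma~\ref{lem:root-disks}. The only difference is bookkeeping: the paper uses the cruder numerator bound $8M\abs z^2$, which needs only $\abs z\ge N$, whereas you absorb the lower-order terms using $\abs z\ge 804N$ to get $3M\abs z^2$ and hence the sharper constant $24$.
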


\begin{proof}
For an original triple, \eqref{eq:triple-pert} and $|x_i|\le M$ give, when $|z|\ge N$,
\[
|\Delta_j(z,x)|\le8M|z|^2.
\]
For an anchor, $|x_i(z^2-Nz)|\le2M|z|^2$. Every baseline is monic cubic and all its roots lie in $|s|\le R$, so for $|z|\ge2R$,
\[
|\bar q_j(z)|\ge(|z|-R)^3\ge\frac{|z|^3}{8}.
\]
Division gives the stated common constant $64$.
\end{proof}

On every local contour, the baseline roots satisfy $|\rho|\le R$, and hence
\[
|Q_a^0(s)|\le(R_0+R)^3.
\]
On the outer circle $|s-c_a|=R_0$, Lemma~\ref{lem:root-disks} gives $|Q_a(s,x)|\ge1$. On an imaginary-axis diameter in one of the cases covered by Lemma~\ref{lem:imag-gap}, $|Q_a(s,x)|\ge\mu$. Therefore
\begin{equation}\label{eq:conditioning}
\abs{\frac{Q_a^0}{Q_a}}\le C_*
\end{equation}
on every full-disk contour for $x\in\mathcal B$, and on every relevant right-half-disk contour.

Distinct centers are separated by at least $H>2R_0$. If $s\in\overline D_a$ and $b\ne a$, then
\[
|s-c_b|\ge H-R_0>1024JC_*M_0>2R.
\]
Combining \eqref{eq:relative-error}, Lemma~\ref{lem:distant-infty}, and \eqref{eq:conditioning} gives the uniform strict estimate
\begin{equation}\label{eq:bounded-rouche}
\abs{\frac{P-F_a}{F_a}}
<C_*(2J)\frac{64M_0}{1024JC_*M_0}
=\frac18<1.
\end{equation}

\begin{proposition}[Bounded frequency compiler]\label{prop:aggregation}
With a cluster separation $H$ chosen by \eqref{eq:H}:
\begin{enumerate}[label=(\roman*)]
\item at the satisfying integer realization $x_{\pi(k)}=2k$, $P(\cdot,x)$ is Hurwitz;
\item if $x\in\mathcal B$ and the source cubic selected by Lemma~\ref{lem:global-defect} has $\Delta\le-1$, then $P(\cdot,x)$ has an open-right-half-plane root.
\end{enumerate}
\end{proposition}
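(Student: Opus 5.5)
Both parts will follow from Lemma~\ref{lem:root-transfer}, applied cluster by cluster with $Q=Q_a$, $G=G_a$ and $QG=F_a$. The strict estimate \eqref{eq:bounded-rouche} gives $\abs{P-F_a}<\tfrac18\abs{F_a}$ on the relevant contour. We only need that $G_a$ has no zeros in $\overline D_a$. Every root of $Q_b^0$ lies within $R$ of $c_b$, and every point of $\overline D_a$ is at distance more than $2R$ from $c_b$ for $b\ne a$. Hence $G_a\ne0$ on $\overline D_a$.

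\textbf{Part (i).} Take $x$ with $x_{\pi(k)}=2k$, so $\norm{x}_\infty\le 2n<M_0$ and $x\in\mathcal B$. Applying Lemma~\ref{lem:root-transfer} on each full disk $D_a$, the contour is $\abs{s-c_a}=R_0$, where \eqref{eq:conditioning} holds. So $P$ and $Q_a$ have the same number of roots in $D_a$. By Lemma~\ref{lem:root-disks}, $Q_a$ has all three roots in $D_a$. The disks $D_a$ are pairwise disjoint because $H>2R_0$. Summing over the $2J$ clusters gives $6J=D=\deg P$ roots of $P$, so every root of $P$ lies in some $D_a$ and is counted there.

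Next apply the lemma on the right half-disk $D_a^+$. Its boundary consists of the arc of $\abs{s-c_a}=R_0$ with $\Rea s\ge0$ and the imaginary-axis diameter. On the diameter we have $s-c_a=i\omega$ with $\omega$ real. Since the source cubic at the satisfying realization has real coefficients and is covered by Lemma~\ref{lem:imag-gap}, $\abs{Q_a(s,x)}\ge\mu$ there, so \eqref{eq:conditioning} holds on the whole boundary of $D_a^+$. Hence $P$ has as many roots in $D_a^+$ as $Q_a$. Because $Q_a(\cdot,x)=q_j(\cdot-c_a,x)$ and the shift is purely imaginary, $Q_a$ has its roots in the open left half-plane, so it has none in $D_a^+$. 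The lemma also rules out roots of $P$ on the diameter. Since all roots of $P$ lie in $\bigcup_a D_a$, $P$ has no roots with $\Rea s\ge0$, and $P$ is Hurwitz.

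\textbf{Part (ii).} Let $x\in\mathcal B$ and let $j$ be the index selected by Lemma~\ref{lem:global-defect}. Take $a=(j,+1)$ and apply the same argument on $D_a^+$. By Lemma~\ref{lem:imag-gap} the selected cubic is bounded below by $\mu$ on the imaginary axis, so \eqref{eq:conditioning} and \eqref{eq:bounded-rouche} hold on the diameter. On the arc they hold by Lemma~\ref{lem:root-disks}, which gives $\abs{Q_a}\ge1$ there. The selected cubic has a root $\zeta$ with $\Rea\zeta>0$, and $\abs\zeta\le R<R_0$. So $Q_a$ has the root $\zeta+c_a$, which lies in $D_a^+$. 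Lemma~\ref{lem:root-transfer} then yields a root of $P$ in $D_a^+\subset\{\Rea s>0\}$.

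The main point to check is the applicability of \eqref{eq:conditioning} on the straight part of the half-disk boundary. It uses the identity $Q_a(c_a+i\omega,x)=q_j(i\omega,x)$ together with the real-$\omega$ gap of Lemma~\ref{lem:imag-gap}, and it needs $\Rea c_a=0$ so that the diameter really lies on the imaginary axis. The other point is that the distant estimate of Lemma~\ref{lem:distant-infty} is used with $M\le M_0$, which is exactly where $x\in\mathcal B$ enters. The remaining bookkeeping is the root count: disjointness of the disks and the total degree $6J$ ensure that no root of $P$ escapes outside $\bigcup_a D_a$.
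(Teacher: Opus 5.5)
Your proposal is correct and follows the paper's proof essentially step for step. You compare $P$ with $F_a=Q_aG_a$ on each full disk via Lemma~\ref{lem:root-transfer}, using that $G_a$ has no zeros on $\overline D_a$, which places exactly three roots of $P$ in each of the $2J$ disjoint disks; you then use half-disk comparisons on $D_a^+$, enabled by the imaginary-axis gap of Lemma~\ref{lem:imag-gap}, to exclude right-half-plane roots in (i) and to transfer the unstable root of the selected cubic in (ii). Your extra bookkeeping only makes explicit what the paper leaves implicit: that $\Rea c_a=0$ puts the diameter on the imaginary axis, that $\zeta+c_a\in D_a^+$, and that roots on the diameter are excluded by the boundary clause of the lemma.
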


\begin{proof}
Every baseline root is within $R$ of its own center, while the cluster separation gives $|s-c_b|>R$ for $s\in D_a$ and $b\ne a$; hence $G_a$ has no roots in $D_a$. Estimate~\eqref{eq:bounded-rouche} and Lemma~\ref{lem:root-transfer} first show that $P$ and $F_a=Q_aG_a$ have the same number of roots in $D_a$. Since $G_a$ has no roots there and $Q_a$ has exactly three roots in $D_a$, $F_a$ and $Q_a$ have the same count, namely three. Thus $P$ has three roots in every full cluster disk. The $2J$ disjoint disks therefore account for all $6J$ roots of $P$.

At a satisfying realization, Lemma~\ref{lem:imag-gap} allows the same comparison on every right half-disk $D_a^+$. The shifted source cubic $Q_a$ has no root there, so neither does $P$. Hence all roots of $P$ lie in the open left half-plane.

For (ii), choose the bad source from Lemma~\ref{lem:global-defect} and either frequency copy. Lemma~\ref{lem:imag-gap} gives an open-right-half-plane root of $Q_a$ and excludes roots on the imaginary-axis diameter. The same half-disk Rouch\'e comparison transfers a positive right-half-plane root count to $P$.
\end{proof}

The bounded compiler already gives a useful intermediate complexity result. Since $N=2(n+1)$, the relevant box is
\[
\mathcal C=[0,2(n+1)]^n\subset\mathcal B.
\]

\begin{corollary}[Bounded affine Hurwitz-family feasibility]\label{cor:affine-hurwitz}
The following decision problem is NP-hard: given $p_0,p_1,\ldots,p_n\in\Q[s]$ such that
\[
P(s,x)=p_0(s)+\sum_{\ell=1}^n x_\ell p_\ell(s)
\]
is monic with degree independent of $x$, decide whether
\[
\exists x\in\mathcal C:\ P(\cdot,x)\text{ is Hurwitz}.
\]
\end{corollary}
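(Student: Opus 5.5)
The plan is to reduce from Betweenness using the bounded frequency compiler, Proposition~\ref{prop:aggregation}. Given an instance with $n$ elements and $m$ triples, we form the source cubics of Section~\ref{sec:source} and the aggregate $P(s,x)$ of \eqref{eq:P-definition}. The data $p_0,\dots,p_n$ are then read off from the affine form $P(s,x)=p_0(s)+\sum_\ell x_\ell p_\ell(s)$.

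First we check that this is a valid instance of the stated problem, computable in polynomial time. By \eqref{eq:P-definition},
\[
p_0=P_0 \qquad\text{and}\qquad p_\ell=\sum_a \frac{\partial\Delta_a}{\partial x_\ell}\,\frac{P_0}{Q_a^0}.
\]
Each $\Delta_a$ is affine in $x$ with coefficient polynomials of degree at most two, taken from \eqref{eq:triple-pert} and \eqref{eq:qA0-h} after the shift $s\mapsto s-c_a$. Each $P_0/Q_a^0$ is a product of $2J-1$ shifted baseline cubics. Conjugate pairing makes every coefficient real and integer. The degree is $6J$ and monicity is independent of $x$, because every $\Delta_a$ has degree at most $2<3$. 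The constants $N$, $M_0$, $R$, $C_*$ and $H$ are polynomially bounded integers (or $C_*$ is rational of polynomial size and $H$ can be rounded up to an integer), so $H=O(\mathrm{poly}(n,m))$. Expanding a product of $O(J)$ cubics whose coefficients are polynomially bounded takes polynomially many arithmetic steps on numbers of $O(J\log(\cdot))$ bits. Hence the $p_\ell$ are computable in polynomial time.

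Next we prove correctness with respect to the box $\mathcal C=[0,N]^n$. If the instance is satisfiable, the realization $x_{\pi(k)}=2k$ lies in $[2,2n]^n\subset\mathcal C$, and Proposition~\ref{prop:aggregation}(i) shows that $P(\cdot,x)$ is Hurwitz there. Conversely, suppose the instance is unsatisfiable and take any $x\in\mathcal C$. Since $\mathcal C\subset\mathcal B$ because $N\le M_0$, Lemma~\ref{lem:global-defect} selects a source cubic with $\Delta\le-1$. Proposition~\ref{prop:aggregation}(ii) then yields a root of $P(\cdot,x)$ in the open right half-plane, so no $x\in\mathcal C$ makes $P$ Hurwitz. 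This gives a polynomial-time many-one reduction from the NP-complete Betweenness problem.

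Correctness is entirely inherited from the earlier results, so the only real obstacle is the bookkeeping for polynomial size. We must check that the shifts $c_{j,\sigma}=\pm ijH$ combine into real integer polynomials $\bar q_j(s-ijH)\,\bar q_j(s+ijH)$. We must also check that all coefficients of $P$ have bit length polynomial in $L$. This holds because each coefficient is bounded by $(\text{poly})^{O(J)}$, i.e.\ it has $O(J\log L)$ bits. That bound suffices for this corollary; the sharper $O(\log L)$ entry bounds belong to the later realization step.
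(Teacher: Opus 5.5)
Your proposal is correct and takes essentially the same route as the paper. Yes-instances go through the realization $x_{\pi(k)}=2k\in\mathcal C$ and Proposition~\ref{prop:aggregation}(i). No-instances go through $\mathcal C\subset\mathcal B$, the defect of Lemma~\ref{lem:global-defect}, and Proposition~\ref{prop:aggregation}(ii); on $\mathcal C$ that lemma necessarily selects an original triple, which the paper observes directly. Polynomial-time construction matches Proposition~\ref{prop:coeff-growth}. Your hedge about rounding is unnecessary: $C_*=(R_0+R)^3(2A_0^2+1)$ is already an integer, so $H$ is an integer as defined.
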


\begin{proof}
For a yes-instance, the satisfying realization $x_{\pi(k)}=2k$ lies in $\mathcal C$ and Proposition~\ref{prop:aggregation}(i) applies. For a no-instance and any $x\in\mathcal C$, no anchor is needed to detect an out-of-range variable; unsatisfiability itself implies that some original triple has $(x_b-x_a)(x_c-x_b)\le0$, hence $\Delta\le-1$. Proposition~\ref{prop:aggregation}(ii) makes the aggregate unstable. Thus the bounded family contains a Hurwitz member exactly for yes-instances. Section~\ref{sec:complexity} verifies polynomial-time exact construction.
\end{proof}

\section{Removing the auxiliary-box restriction}\label{sec:escape}

For a no-instance, the aggregate compiler already rules out stabilizing gains $x\in\mathcal B$ via Rouch\'e arguments. What remains is to rule out stabilizing gains outside the auxiliary analysis box $\mathcal B$. This is necessary because the aggregate is the first-order truncation of a product, not the exact product of the source cubics. The key is that every variable has its own anchor source and the anchor perturbation is
\[
h(s)=s(s-N)
\]
and this unstable root will dominate for sufficiently large $x_i$.

The following lemma is the final step in the chain of equivalences.
\begin{lemma}[Instability outside the auxiliary box]\label{lem:escape}
If
\[
\norm{x}_\infty\ge M_0=100N,
\]
then $P(\cdot,x)$ has an open-right-half-plane root.
\end{lemma}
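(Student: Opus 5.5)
The plan is to fix an index $i$ with $\abs{x_i}=M:=\norm{x}_\infty\ge M_0$ and work in one frequency copy $a=(j,\sigma)$ of the anchor cubic $q_i^A$. In the local variable $z=s-c_a$ (a pure imaginary shift, so $\Rea s=\Rea z$), the aggregate should be dominated near $z=N$ by the anchor perturbation $x_i h(z)=x_i z(z-N)$, which has a simple zero at $z=N$ in the open right half-plane. I will apply Lemma~\ref{lem:root-transfer} on the disk
\[
\mathcal S=\{s:\abs{s-c_a-N}<N/2\},
\]
which lies in $\Rea s>N/2>0$. I take $Q(s)=x_i h(s-c_a)$ and $G=G_a$. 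Then $Q$ has exactly one root in $\mathcal S$, namely $s=c_a+N$, so the lemma gives $P(\cdot,x)$ exactly one root in $\mathcal S$, hence an open-right-half-plane root. I deliberately avoid comparing with the full cubic $Q_a$, because for $\abs{x_i}$ unbounded the conditioning constant $C_*$ from \eqref{eq:conditioning} is not available.

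First I check that $G_a\ne0$ on $\mathcal S$. Points of $\mathcal S$ satisfy $\abs{s-c_a}<3N/2<R_0$. Every root of $Q_b^0$ with $b\ne a$ lies within $R$ of $c_b$, and $\abs{c_b-c_a}\ge H$ with $H-R_0>2R$. This also shows $\abs{s-c_b}\ge 2R$ on $\overline{\mathcal S}$, so Lemma~\ref{lem:distant-infty} applies to every distant factor.

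Next I write the difference to be estimated. From \eqref{eq:P-definition}, dividing by $G_a$,
\[
\frac{P}{G_a}-Q=Q_a^0\Bigl(1+\sum_{b\ne a}\frac{\Delta_b}{Q_b^0}\Bigr),
\]
since $\Delta_a=x_i h(s-c_a)=Q$. It therefore suffices to show
\[
\abs{Q_a^0}\Bigl(1+\sum_{b\ne a}\abs{\Delta_b/Q_b^0}\Bigr)<\abs{x_i h}\quad\text{on }\partial\mathcal S.
\]
On $\partial\mathcal S$ we have $\abs{z}\ge N/2$ and $\abs{z-N}=N/2$, so $\abs{x_i h(z)}\ge MN^2/4\ge 25N^3$. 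Also $\abs z\le 3N/2$, and bounding the coefficients of $\bar q_A$ in \eqref{eq:qA0-h} gives $\abs{Q_a^0}=\abs{\bar q_A(z)}\le(\tfrac{27}{8}+\tfrac94+3+2)N^3+1<11N^3$. Hence the ``$1$'' term contributes less than $11N^3<\tfrac12\cdot 25N^3$.

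For the distant sum, Lemma~\ref{lem:distant-infty} gives $\abs{\Delta_b/Q_b^0}\le 64M/(H-R_0)$. The sum is therefore at most $128JM/(H-R_0)$, and its contribution is at most
\[
11N^3\cdot\frac{128JM}{1024JC_*M_0}\le\frac{11N^3 M}{8C_*M_0},
\]
which is far below $\tfrac12 MN^2/4$ because $C_*\ge(2A_0^2)\cdot R^3$ is enormous compared with $N$. Adding the two pieces gives the strict Rouch\'e inequality, and Lemma~\ref{lem:root-transfer} finishes the argument.

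The main obstacle, and the reason for the particular choice of comparison polynomial, is that $M$ is unbounded. The distant perturbations $\Delta_b$ grow linearly in $M$, so no estimate of the form \eqref{eq:bounded-rouche} with a fixed $M_0$ can work. The fix is that the dominant term $x_i h$ also grows linearly in $M$, so both sides of the Rouch\'e inequality scale together. The remaining checks are then $M$-independent: the fixed baseline $\bar q_A$ is beaten once $M\ge M_0=100N$, and the distant ratio is uniformly small by the size of $H$. I also need the radius $N/2$ (rather than, say, $1$) so that $\abs h\gtrsim N^2$ beats the $O(N^3)$ baseline with only $M\ge 100N$. If the constants were tighter than expected, I would shrink or reposition the disk while keeping it inside $\Rea z>0$.
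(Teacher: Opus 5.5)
Your proof is correct, but you organize the comparison differently from the paper.

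The paper works on the smaller disk $\mathcal E_a=\{s:\abs{s-c_a-N}<N/4\}$ and applies Rouch\'e twice:
\begin{enumerate}[label=(\arabic*)]
\item It compares $Q_a=\bar q_A(w)+x_ih(w)$ with $x_ih(w)$. This gives $Q_a$ exactly one root in $\mathcal E_a$, together with the local conditioning bound $\abs{Q_a^0/Q_a}\le 96N/M$.
\item It feeds that bound into the relative-error identity \eqref{eq:relative-error} to compare $P$ with $F_a=Q_aG_a$. The factor $1/M$ from the conditioning cancels the factor $M$ in the distant sum $128JM/(H-R_0)$.
\end{enumerate}

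You instead take $Q=\Delta_a=x_ih(s-c_a)$ directly as the comparison polynomial in Lemma~\ref{lem:root-transfer}. Both the baseline $Q_a^0$ and the distant terms are then errors against a single dominant term. The $M$-dependence cancels because $\abs{x_ih}$ and the distant sum are both linear in $M$. Your constants check out: $\abs{\bar q_A}<11N^3<\tfrac12\cdot 25N^3$, and the distant contribution is negligible since $11N<C_*M_0$.

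What each route buys: yours saves one Rouch\'e step and the intermediate conditioning estimate. The paper's stays inside the $F_a$/relative-error framework of Proposition~\ref{prop:aggregation}, and it also shows that the source factor $Q_a$ itself has the right-half-plane root.

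One correction to your motivation. The global constant $C_*$ is indeed unavailable for large $M$, but comparing with $Q_a$ is still possible. On the small disk around $c_a+N$, the first Rouch\'e step yields a local conditioning bound that even improves as $M$ grows, and the paper uses exactly that.
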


\begin{proof}
Let $M=\norm{x}_\infty\ge M_0$ and choose $i$ with $|x_i|=M$. Select either shifted copy $a$ of the anchor for $i$, write $w=s-c_a$, and consider
\[
\mathcal E_a=\left\{s:\abs{w-N}<\frac N4\right\}.
\]
This disk lies in the open right half-plane. Since $h(w)=w(w-N)$, it contains exactly one root of $h$. On the boundary,
\[
\frac{3N}{4}\le|w|\le\frac{5N}{4},
\qquad
|h(w)|\ge\frac{3N^2}{16}.
\]
For $N\ge4$,
\[
|\bar q_A(w)|
\le |w|^3+N|w|^2+2N^2|w|+2N^3+1
\le9N^3,
\]
and hence
\begin{equation}\label{eq:escape-local-ratio}
\abs{\frac{\bar q_A(w)}{h(w)}}\le48N.
\end{equation}

First consider the selected shifted anchor factor
\[
Q_a(s,x)=\bar q_A(w)+x_i h(w).
\]
Set
\[
\eta:=\abs{\frac{\bar q_A(w)}{x_i h(w)}}
\le\frac{48N}{M}
\le\frac{12}{25}<1
\]
on $\partial\mathcal E_a$. Rouch\'e's theorem therefore shows that $Q_a$ has the same number of roots in $\mathcal E_a$ as $x_i h(w)$, namely one. Moreover,
\[
\abs{\frac{Q_a^0}{Q_a}}
\le\frac{\eta}{1-\eta}
<2\eta\qquad(\eta\le12/25<1/2)
\le\frac{96N}{M}
\]
on $\partial\mathcal E_a$.

Since $\overline{\mathcal E_a}\subset D_a$ because $5N/4<R_0$, the frequency-separation argument above shows that $G_a$ is nonzero on $\overline{\mathcal E_a}$. Thus $F_a=Q_aG_a$ also has exactly one root in $\mathcal E_a$. For every $b\ne a$,
\[
|s-c_b|\ge H-\frac{5N}{4}>H-R_0,
\]
and Lemma~\ref{lem:distant-infty} gives
\[
\sum_{b\ne a}
\abs{\frac{\Delta_b}{Q_b^0}}
\le\frac{128JM}{H-R_0}.
\]
Using the relative-error identity \eqref{eq:relative-error},
\[
\begin{aligned}
\abs{\frac{P-F_a}{F_a}}
&\le\frac{96N}{M}\frac{128JM}{H-R_0}\\
&<\frac{12288JN}{1024JC_*M_0}
 =\frac{3}{25C_*}
 \le\frac3{25}<1.
\end{aligned}
\]
 A second application of Rouch\'e's theorem transfers the unique root of $F_a$ in $\mathcal E_a$ to $P$; since $\mathcal E_a$ lies in the open right half-plane, $P$ has an open-right-half-plane root.
\end{proof}

The crucial equivalence is established.
\begin{proposition}[Unrestricted affine-family equivalence]\label{prop:one-poly}
The aggregate satisfies
\[
\begin{aligned}
&\text{the Betweenness instance is satisfiable}\\
&\qquad\Longleftrightarrow\quad
\exists x\in\R^n:\ P(\cdot,x)\text{ is Hurwitz}.
\end{aligned}
\]
\end{proposition}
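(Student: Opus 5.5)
The proof of Proposition~\ref{prop:one-poly} assembles results already established, so the plan is a two-direction argument with a case split on the size of $x$ in the converse.

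For the forward direction, assume the Betweenness instance is satisfiable and fix a satisfying total order $\pi$. We take the explicit integer realization $x_{\pi(k)}=2k$ from Proposition~\ref{prop:source}. Since $2\le x_i\le 2n<N$, this point lies in $\mathcal C\subset\mathcal B$. Proposition~\ref{prop:aggregation}(i) then states directly that $P(\cdot,x)$ is Hurwitz at this point, which gives the required $x\in\R^n$.

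For the converse we argue by contraposition. Assume the instance is unsatisfiable and let $x\in\R^n$ be arbitrary; the goal is to show that $P(\cdot,x)$ has an open-right-half-plane root. We split into two cases according to whether $\norm{x}_\infty<M_0$ or $\norm{x}_\infty\ge M_0$.

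In the first case $x\in\mathcal B$. Lemma~\ref{lem:global-defect} supplies a source cubic with $\Delta\le-1$ and constant coefficient $c\ge1$. This is exactly the hypothesis of Proposition~\ref{prop:aggregation}(ii), which gives an open-right-half-plane root. Note that the selected cubic may be an anchor cubic, when some $x_i\notin[0,N]$ but $|x_i|\le M_0$, or a triple cubic. Lemma~\ref{lem:imag-gap} covers both: it only uses $c\ge1$, $|a|\le A_0$ (valid on $\mathcal B$ by the coefficient bounds in Lemma~\ref{lem:root-disks}) and $|\delta|\ge1$.

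In the second case, $\norm{x}_\infty\ge M_0$, and Lemma~\ref{lem:escape} applies directly to give an open-right-half-plane root.

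In either case $P(\cdot,x)$ is not Hurwitz. Since $x$ was arbitrary, no Hurwitz member exists, which completes the converse.

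The only point needing care is that the two cases exhaust $\R^n$ with no gap. The box $\mathcal B$ is closed ($\norm{x}_\infty\le M_0$), while Lemma~\ref{lem:escape} covers $\norm{x}_\infty\ge M_0$. The cases therefore overlap on the boundary and leave nothing uncovered. Beyond this, the main work was already done in Proposition~\ref{prop:aggregation} and Lemma~\ref{lem:escape}, so no new estimates are required.
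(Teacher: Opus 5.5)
Your proposal is correct and follows the paper's proof essentially verbatim. The forward direction applies Proposition~\ref{prop:aggregation}(i) at $x_{\pi(k)}=2k$, and the converse splits $\R^n$ into $\norm{x}_\infty\ge M_0$, handled by Lemma~\ref{lem:escape}, and $\norm{x}_\infty<M_0$, handled by Lemma~\ref{lem:global-defect} together with Proposition~\ref{prop:aggregation}(ii).
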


\begin{proof}
For a yes-instance, use the satisfying realization $x_{\pi(k)}=2k$ and Proposition~\ref{prop:aggregation}(i). Conversely, suppose the instance is unsatisfiable and let $x\in\R^n$. If $\norm{x}_\infty\ge M_0$, Lemma~\ref{lem:escape} gives an open-right-half-plane root. If $\norm{x}_\infty<M_0$, Lemma~\ref{lem:global-defect} supplies a source cubic with $\Delta\le-1$ and constant coefficient at least one, and Proposition~\ref{prop:aggregation}(ii) again gives an open-right-half-plane root. Thus no real parameter vector makes the aggregate Hurwitz.
\end{proof}

\section{Realization as static output feedback}\label{sec:realization}

The affine aggregate can be expanded as
\[
P(s,x)=s^D+\sum_{k=0}^{D-1}
\left(p_{k0}+\sum_{\ell=1}^n p_{k\ell}x_\ell\right)s^k,
\]
and hence admits a standard companion realization. It is much more instructive, however, to construct the plant directly from the source cubics.

For a strictly proper fraction $r(s)/d(s)$ with
\[
d(s)=s^3+d_2s^2+d_1s+d_0,
\qquad
r(s)=r_0+r_1s+r_2s^2,
\]
recall its companion realization
\begin{equation}\label{eq:comp-transfer}
\frac{r(s)}{d(s)}=c_{\mathrm{comp}}(r)(sI-A_{\mathrm{comp}}(d))^{-1}b_{\mathrm{comp}}.
\end{equation}
with state-space data
\[
A_{\mathrm{comp}}(d)=
\begin{bmatrix}
0&1&0\\
0&0&1\\
-d_0&-d_1&-d_2
\end{bmatrix},\;
b_{\mathrm{comp}}=\begin{bmatrix}0\\0\\1\end{bmatrix},
\;
c_{\mathrm{comp}}(r)^\top=\begin{bmatrix}r_0\\r_1\\r_2\end{bmatrix}.
\]

For source $j$ let $A_j^0=A_{\mathrm{comp}}(\bar q_j)$ and define the real frequency-shifted block
\begin{equation}\label{eq:realify}
\mathcal A_j=
\begin{bmatrix}
A_j^0&-\Omega_jI_3\\
\Omega_jI_3&A_j^0
\end{bmatrix},
\qquad
\mathcal B_j=\begin{bmatrix}b_{\mathrm{comp}}\\0\end{bmatrix}.
\end{equation}
For every parameter $x_i$ occurring in source $j$, let $h_{ji}(s)$ denote its coefficient polynomial in $q_j-\bar q_j$, and set
\[
\mathcal C_{ji}=2[\,c_{\mathrm{comp}}(h_{ji})\quad 0_{1\times3}\,].
\]
For an original triple $t=(a,b,c)$,
\begin{equation}\label{eq:hrows}
 h_{t,a}=-(s^2+N^2),\qquad
 h_{t,b}=s^2-Ns,\qquad
 h_{t,c}=Ns+N^2,
\end{equation}
while for the anchor of $i$,
\[
h_{i,i}^{A}=s^2-Ns.
\]

Order the $J=m+n$ source blocks arbitrarily and set
\begin{equation}\label{eq:direct-ABC}
A=\operatorname{diag}(\mathcal A_1,\ldots,\mathcal A_J),
\qquad
B=\operatorname{col}(\mathcal B_1,\ldots,\mathcal B_J).
\end{equation}
Define $C\in\mathbb Z^{n\times6J}$ blockwise by
\begin{equation}\label{eq:C-block}
C_{i,\text{ block }j}=-\mathcal C_{ji},
\end{equation}
with a zero block when $x_i$ does not occur in source $j$.

Applying the block-diagonalization identity derived in \ref{app:direct-realization} gives
\[
\mathcal C_{ji}(sI-\mathcal A_j)^{-1}\mathcal B_j
=
\frac{h_{ji}(s-i\Omega_j)}{\bar q_j(s-i\Omega_j)}
+
\frac{h_{ji}(s+i\Omega_j)}{\bar q_j(s+i\Omega_j)}.
\]
Consequently, if $g_i(s)$ denotes the sum of these two shifted transfer terms over all source blocks containing $x_i$, then
\[
C(sI-A)^{-1}B=-[g_1(s),\ldots,g_n(s)]^\top.
\]
Moreover $\det(sI-A)=P_0(s)$. Hence, for
\[
K=[x_1\ \cdots\ x_n],
\]
the matrix determinant lemma gives
\begin{equation}\label{eq:char-poly}
\boxed{
\det\bigl(sI-(A+BKC)\bigr)
=P_0(s)\left(1+\sum_{i=1}^n x_i g_i(s)\right)
=P(s,x).
}
\end{equation}
The determinant-lemma calculation is initially valid for $s\notin\sigma(A)$. Both sides are polynomials in $s$, so the identity then holds identically.
Thus the direct realization has state dimension $D=6(m+n)$. All entries of $A,B,C$ are integers: the baseline companion coefficients are integer polynomials in $N$ of degree at most three, the frequency shifts are integer, and the numerator rows in \eqref{eq:hrows} have integer coefficients. For fixed $n,m$, the matrices $A$ and $B$ depend only on the prescribed block types and frequencies; the actual Betweenness incidence pattern enters through the sparse block pattern of $C$.

\section{Exact construction and main result}\label{sec:complexity}

Recall that $L$ is the binary encoding length of the Betweenness instance. Since $n,m,J=O(L)$ and $N=O(L)$, the explicit constants satisfy
\[
\begin{aligned}
M_0&=O(N),& A_0&=O(N),\\
R_0&=O(N),
\end{aligned}
\]
\[
\begin{aligned}
\mu^{-1}&=O(N^2),& C_*&=O(N^5),\\
H&=O(JN^6),& \Omega_J&=O(J^2N^6).
\end{aligned}
\]
Therefore every integer appearing directly in $A,B,C$ has $O(\log L)$ bits.
The matrices are obtained by evaluating these explicit integer formulas and placing the resulting blocks according to the incidence pattern of the input triples. Since their dimensions are polynomial in $L$ and all arithmetic uses polynomially many operations on $O(\log L)$-bit integers, this construction runs in polynomial time.

\begin{proposition}[Polynomial-time exact construction]\label{prop:coeff-growth}
The direct matrices $(A,B,C)$ can be constructed exactly in polynomial time. Every matrix entry has $O(\log L)$ bits, and their complete dense encoding has $O(L^2\log L)$ bits. If the coefficient representation of $P$ is required for the separate bounded affine-family problem, it can also be computed exactly in polynomial time and has polynomial encoding length.
\end{proposition}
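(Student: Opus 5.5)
The plan is to follow the explicit construction of Section~\ref{sec:realization} and bound three things: the size of each integer entry, the dimensions, and the number of arithmetic operations. First, every constant used is computed from $n$ and $m$ by a fixed formula, namely $N=2(n+1)$, $M_0=100N$, $A_0=201N$, $R=402N$, $R_0=R+1$, $C_*=(R_0+R)^3(2A_0^2+1)$, $H=2R_0+1024JC_*M_0$, and $\Omega_j=jH$. Each is a product or sum of a constant number of integers polynomial in $N$ and $J$, so we get $C_*=O(N^5)$, $H=O(JN^6)$, and $\Omega_J=O(J^2N^6)$. Since $N,J=O(L)$, every such integer is bounded by a fixed polynomial in $L$ and therefore has $O(\log L)$ bits. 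It can be computed with $O(1)$ multiplications of $O(\log L)$-bit numbers.

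Next I list the entries of $A,B,C$. In $A$ the nonzero entries are of three kinds: the companion entries $1$, the entries $-N^3-1,-N^2,-N$ of $\bar q_B$ and $-2N^3-1,-2N^2,-N$ of $\bar q_A$, and the shifts $\pm\Omega_j$. The entries of $B$ are $0$ or $1$. The entries of $C$ are $\pm2$ times the coefficients of the $h_{ji}$ from \eqref{eq:hrows}, which lie in $\{0,\pm1,\pm N,\pm N^2\}$. All of these have $O(\log L)$ bits. The dimension is $D=6(m+n)=O(L)$, so $A$ has $O(L^2)$ entries and $C$ has $nD=O(L^2)$ entries. The dense encoding therefore has $O(L^2\log L)$ bits. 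The construction fills $J$ blocks of $A$ and, for each triple and anchor, at most three nonzero blocks of $C$, reading off the incidence data. This takes polynomial time.

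For the coefficient representation of $P$, I would use \eqref{eq:P-definition}. I would compute $P_0=\prod_a Q_a^0$ and each $P_0/Q_a^0=\prod_{b\ne a}Q_b^0$ exactly over the Gaussian integers, since each $Q_a^0(s)=\bar q_j(s-c_a)$ has coefficients in $\mathbb Z[i]$. Pairing conjugates gives real integer cubic-pair sextics $\bar q_j(s-i\Omega_j)\bar q_j(s+i\Omega_j)$, so the work can instead be done over $\mathbb Z$ with products of $J$ sextics. Then each $p_\ell$ is a sum over the sources containing $x_\ell$ of $h_{ji}$-shifted terms times the complementary products, all computed by polynomial multiplication.

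The main obstacle I expect is the bit-length bound here, since the coefficients of $P$ are no longer $O(\log L)$-bit. I would use the standard estimate that the product of $2J$ monic cubics whose coefficients are bounded by $B_0=\mathrm{poly}(L)$ in absolute value has coefficients at most $(1+B_0)^{3\cdot 2J}$. This bound follows by bounding the $\ell_1$ norm of the coefficient vector multiplicatively. It gives $O(J\log L)=O(L\log L)$ bits per coefficient. There are $O(nD)$ coefficients, so the total length is polynomial, and schoolbook multiplication over $O(J)$ steps on such integers runs in polynomial time.
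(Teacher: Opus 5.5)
Your proposal is correct and follows the paper's proof essentially step for step. You enumerate the entry types of $A,B,C$ (companion coefficients, $\pm\Omega_j$, and $\pm2$ times the $h_{ji}$ coefficients), use $D=6(m+n)=O(L)$ for the dense bound, and compute $P$ by exact multiplication of the shifted cubics; your explicit $\ell_1$-norm bound $(1+B_0)^{6J}$ and the conjugate pairing into integer sextics just make precise the paper's remark that intermediate bit lengths stay polynomially bounded.
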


\begin{proof}
Every entry of $A$ is zero, an integer coefficient of one of the two baseline companions, or $\pm\Omega_j$; every entry of $B$ is zero or one; and every entry of $C$ is zero or one of the integer coefficients $\{-N^2,-N,-1,0,1,N,N^2\}$ multiplied by the fixed factor two from the output row. Hence every direct matrix entry has $O(\log L)$ bits. The dimensions are $D\times D$, $D\times1$, and $n\times D$ with $D=6J=O(L)$, so the dense output uses $O(L^2)$ entries and $O(L^2\log L)$ bits.

For an explicit coefficient list of $P$, equation~\eqref{eq:P-definition} contains $O(L)$ products of $O(L)$ cubic factors. Their shifted coefficients have polynomial magnitude and $O(\log L)$ bit length. Exact polynomial multiplication therefore computes all coefficients using polynomially many integer operations with polynomially bounded intermediate bit lengths. This expansion is not needed for the SOF construction itself though.
\end{proof}

The main theorem of the paper now follows immediately from Propositions~\ref{prop:one-poly} and \ref{prop:coeff-growth}.
\begin{proof}[Proof of Theorem~\ref{thm:main}]
Proposition~\ref{prop:one-poly} and \eqref{eq:char-poly} give
\[
\begin{aligned}
\text{Betweenness satisfiable}
&\Longleftrightarrow
\exists x\in\R^n:P(\cdot,x)\text{ Hurwitz}\\
&\Longleftrightarrow
\exists K\in\R^{1\times n}:A+BKC\text{ Hurwitz}.
\end{aligned}
\]
Proposition~\ref{prop:coeff-growth} proves polynomial-time exact construction and the stated encoding bounds. Since Betweenness is NP-complete, this is a polynomial-time many-one reduction to $\mathsf{SOF}_1$.
\end{proof}

\section{Conclusion}

We have given a polynomial-time reduction from Betweenness to unrestricted single-input, multiple-output static output-feedback stabilization. The reduction first represents each object by an unrestricted real parameter and uses a common cubic source gadget to encode both the ordering constraints and auxiliary anchor constraints. The anchors serve two purposes: they localize every feasible source realization, and they supply the mechanism that excludes spurious stabilizing gains of large magnitude after aggregation.

The central technical step combines all source conditions into one real monic polynomial whose coefficients remain affine in the gain parameters. Frequency separation permits local root-count comparisons between the aggregate and its source factors on a bounded region, while the anchor-based escape argument handles the complementary unbounded region. A bank of realified shifted cubic systems realizes the resulting affine polynomial directly as a SIMO static output-feedback characteristic polynomial.

The construction has integer data, state dimension $6(m+n)$, and logarithmic entry bit lengths. It therefore establishes NP-hardness for unrestricted SIMO stabilization; transposition gives the corresponding MISO result, and zero padding extends the result to general MIMO static output feedback. As a by-product, the bounded compiler proves NP-hardness of deciding whether an affinely parameterized rational polynomial family contains a Hurwitz member on a rational box.

\appendix

\section{Identity for the real frequency-shifted block}\label{app:direct-realization}

Let $A_*$ be a real square matrix, $b_*$ a real column, $c_*$ a real row, and define
\[
\mathcal A_\Omega(A_*)=
\begin{bmatrix}A_*&-\Omega I\\ \Omega I&A_*\end{bmatrix},
\quad
\mathcal B(b_*)=\begin{bmatrix}b_*\\0\end{bmatrix},
\quad
\mathcal C(c_*)=2[c_*\ \ 0].
\]
Let $I$ have the same dimension as $A_*$ and introduce the complex change of basis
\[
S=
\begin{bmatrix}
I&I\\
-iI&iI
\end{bmatrix},
\qquad
S^{-1}=\frac12
\begin{bmatrix}
I&iI\\
I&-iI
\end{bmatrix}.
\]
Direct block multiplication gives
\[
S^{-1}\mathcal A_\Omega(A_*)S
=
\begin{bmatrix}
A_*+i\Omega I&0\\
0&A_*-i\Omega I
\end{bmatrix},
\]
and transforms the input and output as
\[
S^{-1}\mathcal B(b_*)=
\frac12\begin{bmatrix}b_*\\b_*\end{bmatrix},
\qquad
\mathcal C(c_*)S=2[c_*\ \ c_*].
\]
Thus, if $G(s)=c_*(sI-A_*)^{-1}b_*$, then
\[
\begin{aligned}
\mathcal C(c_*)\bigl(sI-\mathcal A_\Omega(A_*)\bigr)^{-1}\mathcal B(b_*)
&=c_*\bigl((s-i\Omega)I-A_*\bigr)^{-1}b_*\\
&\qquad{}+c_*\bigl((s+i\Omega)I-A_*\bigr)^{-1}b_*\\
&=G(s-i\Omega)+G(s+i\Omega).
\end{aligned}
\]
The same similarity transformation gives
\[
\det\bigl(sI-\mathcal A_\Omega(A_*)\bigr)
=\det((s-i\Omega)I-A_*)\det((s+i\Omega)I-A_*).
\]
Applying these identities with $A_*=A_{\mathrm{comp}}(\bar q_j)$, $b_*=b_{\mathrm{comp}}$, and $c_*=c_{\mathrm{comp}}(h_{ji})$ proves the transfer and determinant statements used in Section~\ref{sec:realization}. The factors in the determinant identity are the two conjugate frequency-shifted baseline cubics, while the transfer identity gives the corresponding pair of shifted numerator terms.

\section*{Acknowledgments}
The author acknowledges the financial support from the Swedish Government Agency VINNOVA through the SEDDIT competence center program.

\section*{Declaration of generative AI and AI-assisted technologies
in the manuscript preparation process}
OpenAI ChatGPT 5.6 and Claude Opus 5 were used in various stages of the manuscript preparation, including ideation, generation of numerical verification code, writing assistance, and adversarial review. The author assumes responsibility for all content.

\bibliographystyle{siamplain}
\bibliography{references}

\end{document}